\newcommand{\excise}[1]{}
\newtheorem{thm}{Theorem}[section]
\theoremstyle{definition}
\newtheorem{example}[thm]{Example}
\newtheorem{remark}[thm]{Remark}
\newtheorem{defn}[thm]{Definition}
\numberwithin{equation}{section}
\newcommand{\ring}[1]{\ensuremath{\mathbb{#1}}}
\renewcommand\>{\rangle}
\newcommand\<{\langle}
\newcommand\NN{\ring{N}}
\newcommand\ZZ{\ring{Z}}
\renewcommand\aa{{\mathbf a}}
\DeclareMathOperator\lcm{lcm} 
\DeclareMathOperator\Betti{Betti} 
\begin{document}

\mbox{}
\title[Realizable sets of catenary degrees of numerical monoids]{Realizable sets of catenary degrees\\of numerical monoids}

\author{Christopher O'Neill}
\address{Mathematics\\University of California, Davis\\One Shields Ave\\Davis, CA 95616}
\email{coneill@math.ucdavis.edu}

\author{Roberto Pelayo}
\address{Mathematics Department\\University of Hawai`i at Hilo\\Hilo, HI 96720}
\email{robertop@hawaii.edu}

\date{\today}

\begin{abstract}

The catenary degree is an invariant  that measures the distance between factorizations of elements within an atomic monoid.  In this paper, we classify which finite subsets of $\ZZ_{\ge 0}$ occur as the set of catenary degrees of a numerical monoid (i.e., a co-finite, additive submonoid of $\ZZ_{\ge 0}$).  In particular, we show that, with one exception, every finite subset of $\ZZ_{\ge 0}$ that can possibly occur as the set of catenary degrees of some atomic monoid is actually achieved by a numerical monoid.


\end{abstract}

\maketitle


\section{Introduction} \label{sec:intro}

Nonunique factorization theory aims to classify and quantify the failure of elements of a cancellative commutative monoid $M$ to factor uniquely into irreducibles~\cite{nonuniq}.  This~is often achieved using arithmetic quantities called factoriation invariants.  We consider here the catenary degree invariant (Definition~\ref{d:catenarydegree}), which is a non-negative integer $\mathsf c(m)$ measuring the distance between the irreducible factorizations of an element $m \in M$.  

Many problems in factorization theory involve characterizing which possible values an invariant can take, given some minimal assumptions on the underlying monoid~$M$.  Solutions to such problems often take the form of a ``realization'' result in which a family of monoids achieving all possible values is identified.  
One such problem of recent interest concerns the possible values of the delta set invariant $\Delta(M)$, comprised~of successive differences in lengths of factorizations of elements.  It can be easily shown that $\min\Delta(M) = \gcd\Delta(M)$ for any atomic monoid \cite{nonuniq}, but no further restrictions were known.  The so-called ``delta set realization problem''~\cite{deltarealizationnumerical,deltadim3,deltadim3b} was recently solved by Geroldinger and Schmidt~\cite{deltarsetealization} by identifying a family of finitely generated Krull monoids whose delta sets achieve every finite set $D \subset \ZZ_{\ge 1}$ satisfying $\min(D) = \gcd(D)$.  

In a similar vein, the catenary degree realization problem \cite[Problem~4.1]{mincatdeg} asks which finite sets can occur as the set $\mathsf C(M)$ of catenary degrees achieved by elements of some atomic monoid $M$.  A~recent paper of Fan and Geroldinger~\cite{catenarycartesian} proves that, with minimal assumptions, the product $M = M_1 \times M_2$ of two monoids $M_1$ and $M_2$ has set of catenary degrees $\mathsf C(M) = \mathsf C(M_1) \cup  \mathsf C(M_2)$.  While this does provide a complete solution to the catenary degree realization problem, the brevity of the proof raises the question: is such a result is possible without appealing to Cartesian products?


In this paper, we prove that with only a single exception, any finite set that can occur as the set of catenary degrees of some atomic monoid occurs as the set of catenary degrees of a numerical monoid (Theorem~\ref{t:realization}).  Since numerical monoids are submonoids of a reduced rank one free monoid (namely, $\ZZ_{\ge 0}$), they never contain a Cartesian product of two nontrivial monoids.  

Our method of constructing numerical monoids with prescribed sets of catenary degrees involves carefully gluing smaller numerical monoids (Definition~\ref{d:gluing}) in such a way as to retain complete control of the catenary degree of every element (Theorem~\ref{t:gluing}).  The relationship between the catenary degree and gluings has been studied before~\cite{presentsacc}, though noteable subtleties arise; Remark~\ref{r:upperbound} clarifies some ambiguity in~\cite{presentsacc}, and is another primary contribution of this manuscript.

\section{Background}
\label{sec:background}

In what follows, $\NN = \ZZ_{\ge 0}$ denotes the set of non-negative integers.

\begin{defn}\label{d:numericalmonoid}
A \emph{numerical monoid} $S$ is an additive submonoid of $\NN$ with finite complement.  When we write $S = \<n_1, \ldots, n_k\>$, we assume $n_1 < \cdots < n_k$, and the chosen generators $n_1, \ldots, n_k$ are minimal with respect to set-theoretic inclusion.  These minimal generators are called \emph{irreducible} elements or \emph{atoms}.  
\end{defn}

\begin{defn}\label{d:factorizations}
Fix $n \in S = \<n_1, \ldots, n_k\>$.  A \emph{factorization} of $n$ is an expression $n = u_1 + \cdots + u_r$ of $n$ as a sum of atoms $u_1, \ldots, u_r$ of $S$.  
Write 
$$\mathsf Z_S(n) = \{(a_1, \ldots, a_k) : n = a_1n_1 + \cdots + a_kn_k\} \subset \ZZ_{\ge 0}^k$$
for the \emph{set of factorizations} of $n \in S$.  Given $\aa \in \mathsf Z_S(n)$, we denote by $|\aa|$ the number of irreducibles in the factorization $\aa$, that is, $|\aa| = a_1 + \cdots + a_k$.  
\end{defn}

We are now ready to define the catenary degree.  

\begin{defn}\label{d:catenarydegree}
Fix an element $n \in S = \<n_1, \ldots, n_k\>$ and factorizations $\aa, \aa' \in \mathsf Z_S(n)$.  The \emph{greatest common divisor of $\aa$ and $\aa'$} is given by 
$$\gcd(\aa,\aa') = (\min(a_1,b_1), \ldots, \min(a_k,b_k)) \in \ZZ_{\ge 0}^k,$$
and the \emph{distance between $\aa$ and $\aa'$} (or the \emph{weight of $(\aa,\aa')$}) is given by 
$$\mathsf d(\aa,\aa') = \max(|\aa - \gcd(\aa,\aa')|,|\aa' - \gcd(\aa,\aa')|).$$
Given $N \ge 0$, an \emph{$N$-chain from $\aa$ to $\aa'$} is a sequence $\aa = \aa_1, \aa_2, \ldots, \aa_k = \aa' \in \mathsf Z_S(n)$ of factorizations such that $\mathsf d(\aa_{i-1},\aa_i) \le N$ for all $i \le k$.  The \emph{catenary degree of~$n$}, denoted $\mathsf c_S(n)$, is the smallest $N \ge 0$ such that there exists an $N$-chain between any two factorizations of $n$.  The \emph{set of catenary degrees of $S$} is the set $\mathsf C(S) = \{\mathsf c(m) : m \in S\}$, and the \emph{catenary degree of $S$} is the supremum $\mathsf c(S) = \sup\mathsf C(S)$.  
\end{defn}

\begin{defn}\label{d:bettielements}
Fix a finitely generated monoid $S$.  For each nonzero $n \in S$, consider the graph $\nabla_n$ with vertex set $\mathsf Z(n)$ in which two vertices $\aa, \aa' \in \mathsf Z(n)$ share an edge if $\gcd(\aa,\aa') \ne 0$.  If $\nabla_n$ is not connected, then $n$ is called a \emph{Betti element} of $S$.  We write 
$$\Betti(S) = \{b \in S : \nabla_b \text{ is disconnected}\}$$
for the set of Betti elements of $S$.  
\end{defn}

We conclude this section by defining the gluing operation for numerical monoids.  For a more thorough introduction to gluing, see \cite[Chapter~8]{numerical}.  

\begin{defn}\label{d:gluing}
Fix numerical monoids $S_1$ and $S_2$, and positive integers $d_1$ and $d_2$.  The monoid $S = d_1S_1 + d_2S_2$ is a \emph{gluing of $S_1$ and $S_2$ by $d$} if $d = \lcm(d_1,d_2) \in S_1 \cap S_2$.  
\end{defn}

\section{Catenary degree of numerical monoid gluings}
\label{sec:gluing}

Theorem~\ref{t:gluing} provides the primary technical result used in Theorem~\ref{t:realization} to construct a numerical monoid with a prescribed set of catenary degrees, and is closely related to Theorem~\ref{t:pedro}, an ambiguously worded (Remark~\ref{r:upperbound}) result appearing in~\cite{presentsacc}.  

\begin{thm}[{\cite[Corollary~4]{presentsacc}}]\label{t:pedro}
If $S$ is a gluing of $S_1$ and $S_2$ by $d$, then 
$$\mathsf c(S) \le \max\{\mathsf c(S_1), \mathsf c(S_2), \mathsf c_S(d)\}$$
and equality holds if $\mathsf c(S_1)$ and $\mathsf c(S_2)$ are each computed by taking the maximum of $\mathsf c_S(n)$ over elements of $S_1$ and $S_2$, respectively.  
\end{thm}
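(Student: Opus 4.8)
The plan is to exploit how factorizations in a gluing decompose along the two factors. Writing the atoms of $S$ as $d_1a_i$ (for each atom $a_i$ of $S_1$) and $d_2b_j$ (for each atom $b_j$ of $S_2$), every factorization $\aa \in \mathsf Z_S(n)$ splits as $\aa = \gamma \oplus \delta$, where $\gamma$ collects the $S_1$-atoms and $\delta$ the $S_2$-atoms; this records $s_1 = \sum_i \gamma_i a_i \in S_1$ and $s_2 = \sum_j \delta_j b_j \in S_2$ with $n = d_1 s_1 + d_2 s_2$. Setting $e_i = d/d_i$, I would first check that the pairs $(s_1, s_2)$ arising this way form one arithmetic progression $(s_1 + k e_1,\, s_2 - k e_2)$ indexed by $k \in \ZZ$, since any two differ by a common multiple of $d = \lcm(d_1,d_2)$; I call $k$ the \emph{split} of $\aa$. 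The factorizations of a fixed split are then exactly the pairs consisting of a factorization of $s_1$ in $S_1$ and a factorization of $s_2$ in $S_2$.

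For the upper bound, fix $n$ and factorizations $\aa, \aa'$, and set $N = \max\{\mathsf c(S_1), \mathsf c(S_2), \mathsf c_S(d)\}$; I would build an $N$-chain from $\aa$ to $\aa'$ in two phases. In the first phase I equalize the splits. Because $e_1 \in S_1$ and $e_2 \in S_2$, every split between that of $\aa$ and that of $\aa'$ is again valid, so it suffices to move one step at a time. To decrease $s_2$ by $e_2$, I write $s_2 = e_2 + (s_2 - e_2)$ with both summands in $S_2$ and, after an $S_2$-chain of weight $\le \mathsf c(S_2)$, expose inside $\delta$ a sub-factorization $\delta_0$ of $e_2$, whose atoms sum to $d$. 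I then convert this chunk into $S_1$-atoms summing to $d$; crucially, rather than exchanging the all-$S_2$ chunk for an all-$S_1$ chunk in a single step, I follow a minimal chain between these two factorizations inside $\mathsf Z_S(d)$, mirroring each step within the chunk. Since only the chunk changes, each such move has the same weight as the corresponding move in $\mathsf Z_S(d)$, hence weight $\le \mathsf c_S(d)$. Iterating brings $\aa$ to a factorization sharing the split of $\aa'$. In the second phase I connect the $S_1$-parts by a chain of weight $\le \mathsf c(S_1)$ and then the $S_2$-parts by one of weight $\le \mathsf c(S_2)$; altering one side fixes the other, so each move has the same weight in $S$ as in $S_i$. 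This yields an $N$-chain, so $\mathsf c_S(n) \le N$ for all $n$, and thus $\mathsf c(S) \le N$.

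The step I expect to be the main obstacle is the chunk conversion, which tacitly requires that $d$ possess genuine all-$S_1$ and all-$S_2$ factorizations, that is, $e_1 \in S_1$ and $e_2 \in S_2$ rather than merely $d \in S_1 \cap S_2$; this is the standard gluing hypothesis and exactly the point at which the statement of \cite[Corollary~4]{presentsacc} is imprecise, as discussed in Remark~\ref{r:upperbound}. The delicate part is verifying that each interpolating move truly has weight $\le \mathsf c_S(d)$, and not merely that the two endpoint factorizations of $d$ lie at distance $\le \mathsf c_S(d)$.

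Finally, for the equality I would supply the matching lower bound. Since $d = d_1 e_1 \in S$, we have $\mathsf c(S) \ge \mathsf c_S(d)$ at once. The hypothesis that $\mathsf c(S_1)$ equals the maximum of $\mathsf c_S(d_1 m)$ over $m \in S_1$ provides an element $d_1 m \in S$ of catenary degree exactly $\mathsf c(S_1)$, whence $\mathsf c(S) \ge \mathsf c(S_1)$, and symmetrically $\mathsf c(S) \ge \mathsf c(S_2)$. This hypothesis is genuinely needed: scaling a Betti element $b$ of $S_1$ to $d_1 b \in S$ can create new $S_2$-factorizations that act as shortcuts and strictly lower the catenary degree, so the naive bound $\mathsf c_S(d_1 b) \ge \mathsf c_{S_1}(b)$ may fail. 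Combining the two bounds gives $\mathsf c(S) = N$, as claimed.
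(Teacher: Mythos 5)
Your argument is essentially correct, but note that the paper itself offers no proof of Theorem~\ref{t:pedro}: it is quoted verbatim (modulo rewording) from \cite[Corollary~4]{presentsacc}, and the route taken there --- and echoed in this paper's proof of Theorem~\ref{t:gluing} --- is structural rather than chain-theoretic. One shows that $\Betti(S) = d_1\Betti(S_1) \cup d_2\Betti(S_2) \cup \{d\}$ (a minimal presentation of the gluing is the union of the scaled minimal presentations plus the single relation equating the all-$S_1$ and all-$S_2$ factorizations of $d$), and then invokes $\mathsf c(S) = \max\{\mathsf c_S(m) : m \in \Betti(S)\}$ from \cite{catenarytamefingen}. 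Your proof instead builds the $N$-chains by hand: split each factorization into its $S_1$- and $S_2$-parts, index the possible splittings of $n$ by an arithmetic progression with common difference $(e_1,-e_2)$, equalize the split by repeatedly exposing a sub-factorization of $e_2$ (cost $\le \mathsf c(S_2)$) and converting that chunk along a chain in $\mathsf Z_S(d)$ (cost $\le \mathsf c_S(d)$, since adding a fixed common part to both endpoints of a step preserves its weight), and finally connect the two parts separately. This is sound, more elementary, and has the virtue of making visible exactly where each of the three quantities in the maximum is used; the Betti-element route is shorter but hides the chain construction inside the presentation-theoretic machinery. You also correctly isolate the two genuine subtleties: the chunk conversion needs $e_1 \in S_1$ and $e_2 \in S_2$ (so that $d$ has both an all-$S_1$ and an all-$S_2$ factorization), which is the standard gluing hypothesis and is strictly stronger than the condition $d \in S_1 \cap S_2$ of Definition~\ref{d:gluing}; and equality requires the nonstandard reading of $\mathsf c(S_i)$ because scaling a Betti element of $S_1$ into $S$ can acquire new mixed factorizations that lower its catenary degree. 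One small misattribution: Remark~\ref{r:upperbound} concerns only the second of these subtleties (how $\mathsf c(S_1)$ and $\mathsf c(S_2)$ are to be computed), not the hypothesis on $e_1$ and $e_2$.
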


\begin{remark}\label{r:upperbound}
The wording of Theorem~\ref{t:pedro} differs significantly from the original statement appearing in \cite{presentsacc}.  In particular, the original wording was ambiguous, and it was unclear that $\mathsf c(S_1)$ and $\mathsf c(S_2)$ should be computed in the (nonstandard) way specified above.  If these values are computed in the usual way, then the inequality in Theorem~\ref{t:pedro} can indeed be strict (see, for instance, Examples~\ref{e:gluingsmallc} and~\ref{e:gluingmoregens}); the unambiguous statement given in Theorem~\ref{t:pedro} is the result of discussions with the second author of~\cite{presentsacc} that took place after such a monoid $S$ was found.  
\end{remark}

Remark~\ref{r:upperbound} is an example of one of the many subtleties one encounters when working with the catenary degree.  It remains an interesting question to determine for which $S$ equality is achieved in Theorem~\ref{t:pedro}; Theorem~\ref{t:gluing} gives one such setting.  

\begin{thm}\label{t:gluing}
Suppose $S = \<n_1, \ldots, n_k\>$ is a numerical monoid.  Fix $c > \mathsf c(S)$, $b \in S$ nonzero with $\gcd(b,c) = 1$, and let $T = \<cn_1, \ldots, cn_k, b\>$.  Then 
$$\mathsf c_T(n) = \left\{\begin{array}{ll}
\mathsf c_S(n/c) & n - cb \notin T \\
c & n - cb \in T
\end{array}\right.$$
for any $n \in T$.  In particular, $\mathsf c(T) = c$.  
\end{thm}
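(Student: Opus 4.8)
The plan is to read off $\mathsf c_T(n)$ directly from the factorizations of a fixed $n\in T$. I would write each factorization as a pair $(\aa,m)$, where $\aa=(a_1,\dots,a_k)\in\NN^k$ records the multiplicities of the atoms $cn_1,\dots,cn_k$ and $m\in\NN$ the multiplicity of $b$, so that $n=c(a_1n_1+\cdots+a_kn_k)+mb$. Reducing this identity modulo $c$ gives $mb\equiv n\pmod c$, and since $\gcd(b,c)=1$ this pins down the residue of $m$: there is a single $m_0\in\{0,\dots,c-1\}$, depending only on $n$, with $m\equiv m_0\pmod c$ for every factorization. The decisive consequence I would extract first is that $\mathsf d\big((\aa,m),(\aa',m')\big)\ge|m-m'|$, so a chain step of weight $<c$ must leave $m$ fixed, because two admissible values of $m$ differ by a multiple of $c$, hence by $0$ or by at least $c$. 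It is also convenient to note that $\bar n:=(n-m_0b)/c$ lies in $S$ for every $n\in T$.

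Next I would establish the dichotomy behind the two cases. Deleting $c$ copies of $b$ from a factorization with $m\ge c$ yields a factorization of $n-cb$, while adjoining $c$ copies of $b$ to a factorization of $n-cb$ yields one of $n$ with $m\ge c$; thus $n-cb\in T$ precisely when $n$ has a factorization with $m\ge c$, and $n-cb\notin T$ precisely when every factorization attains the minimal value $m=m_0<c$. In the latter case every factorization is $(\aa,m_0)$ with $a_1n_1+\cdots+a_kn_k=\bar n$, so dropping the (constant) last coordinate gives a bijection $\mathsf Z_T(n)\to\mathsf Z_S(\bar n)$; since that coordinate never enters a gcd or a difference, this bijection is an isometry for $\mathsf d$, whence $\mathsf c_T(n)=\mathsf c_S(\bar n)\le\mathsf c(S)<c$. (This $\bar n$ is the element written $n/c$ in the statement.)

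For $n-cb\in T$ the lower bound $\mathsf c_T(n)\ge c$ is then immediate: a factorization with $m\ge c$ exists by the dichotomy, while $(\gamma',m_0)$ with $\gamma'\in\mathsf Z_S(\bar n)$ is a factorization with $m=m_0<c$, so two values of $m$ occur and, by the first paragraph, no chain of weight $<c$ can join them. The main obstacle is the matching upper bound $\mathsf c_T(n)\le c$, for which I would use an exchange move: fixing $\gamma\in\mathsf Z_S(b)$, replace $c$ copies of $b$ by the atoms $cn_1,\dots,cn_k$ dictated by $\gamma$, i.e.\ $(\aa,m)\mapsto(\aa+\gamma,m-c)$, a single step of weight $\max(c,|\gamma|)$. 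Iterating drives any factorization to the level $m=m_0$, where the isometry above connects everything by $S$-chains of weight $\le\mathsf c(S)<c$, and reversing exchange moves reaches targets with larger $m$. For every step to cost exactly $c$ one needs a $\gamma$ with $|\gamma|\le c$, that is, an $S$-factorization of $b$ using at most $c$ atoms; checking that the minimal-length factorization of $b$ meets this bound under the hypotheses is the crux, as it is exactly what keeps the crossing weight $\max(c,|\gamma|)$ from exceeding $c$. Granting this, both cases of the formula hold, and $\mathsf c(T)=c$ follows at once: the value $c$ is attained at $n=cb$ (where $cb-cb=0\in T$), while every $n$ with $n-cb\notin T$ contributes only $\mathsf c_S(\bar n)\le\mathsf c(S)<c$.
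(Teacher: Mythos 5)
Your factorization bookkeeping, the congruence $mb\equiv n\pmod c$ forcing the $b$-multiplicity to be constant modulo $c$, the resulting isometry $\mathsf Z_T(n)\to\mathsf Z_S(\bar n)$ when $n-cb\notin T$, and the lower bound $\mathsf d\ge c$ when two $b$-multiplicities differ all match the paper's argument (and are in places more careful than it, e.g.\ in interpreting ``$n/c$'' as $(n-m_0b)/c$). The divergence is in the upper bound: the paper deduces $\mathsf c_T(n)\le c$ from $\mathsf c(T)=\max\{\mathsf c_T(m):m\in\Betti(T)\}$ together with the assertion $\mathsf c_T(cb)=c$, while you construct explicit chains via the exchange move $(\aa,m)\mapsto(\aa+\gamma,m-c)$ for $\gamma\in\mathsf Z_S(b)$.

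The step you flag as the crux --- that $b$ admits an $S$-factorization with at most $c$ atoms --- is a genuine gap, and it cannot be closed from the stated hypotheses: nothing bounds the minimal factorization length $L=\min\{|\gamma|:\gamma\in\mathsf Z_S(b)\}$ by $c$. Your own analysis shows exactly what goes wrong when $L>c$: the element $cb$ has $\mathsf Z_T(cb)=\{(0,\ldots,0,c)\}\cup\{(\gamma,0):\gamma\in\mathsf Z_S(b)\}$, every edge leaving $(0,\ldots,0,c)$ has weight $\max(c,|\gamma|)$, and hence $\mathsf c_T(cb)=\max(c,L)$. Concretely, take $S=\langle 2,3\rangle$ (so $\mathsf c(S)=3$), $c=4$, $b=25$ (so $L=9$), and $T=\langle 8,12,25\rangle$: then $\mathsf Z_T(100)=\{(0,0,4),(2,7,0),(5,5,0),(8,3,0),(11,1,0)\}$ and $\mathsf c_T(100)=9>4$ even though $100-cb=0\in T$. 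So what you have isolated is not a missing lemma but a missing hypothesis --- one must additionally require $L\le c$ (as happens in all of the paper's examples, where $b$ is chosen with a short factorization) --- and the paper's own proof silently assumes it when it writes $\mathsf c_T(cb)=c$ with no justification. Granting $L\le c$, your chain construction is complete and, unlike the paper's, self-contained; without it, no proof of the statement as written can succeed.
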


\begin{proof}
Since $b \in S$ and $\gcd(b,c) = 1$, $T$ is a gluing of $S$ and $\NN$ by $cb$ by \cite[Theorem~8.2]{numerical}, meaning $\Betti(T) = c\Betti(S) \cup \{cb\}$ by \cite[Corollary~2]{presentsacc}.  As such, if $n - cb \notin T$, then $a_{k+1}$ is constant among all factorizations $\aa \in \mathsf Z_T(n)$, and thus $\mathsf c_T(n) = \mathsf c_S(n/c)$.  

Now, suppose $n - cb \in T$.  By \cite[Theorem~3.1]{catenarytamefingen}, 
$$\mathsf c(T) = \max\{\mathsf c_T(m) : m \in \Betti(T)\} = \mathsf c_T(cb) = c,$$
so it suffices to prove that $\mathsf c_T(n) \ge c$.  To this end, we will show that if $\aa, \aa' \in \mathsf Z_T(n)$ with $a_{k+1} \ne a'_{k+1}$, then $\mathsf d(\aa, \aa') \ge c$.  Without loss of generatity, it suffices to assume $\gcd(\aa, \aa') = 0$ and $a_{k+1} > 0$.  Since 
$$a_{k+1}b + c(a_1n_1 + \cdots + a_kn_k) = c(a_1'n_1 + \cdots + a_k'n_k)$$
and $\gcd(b,c) = 1$, we must have $c \mid a_{k+1}$.  This implies
$$\mathsf d(\aa,\aa') \ge a_{k+1} \ge c,$$
which completes the proof.  
\end{proof}

We conclude this section with several examples demonstrating that no hypotheses in Theorem~\ref{t:gluing} can be safely omitted.  

\begin{example}\label{e:gluingsmallc}
The numerical monoid $T = \<6, 9, 10, 14\> = 2S + 9\NN$ is a gluing, where $S = \<3, 5, 7\>$.  In this case, $\mathsf c(T) = 3$, even though $\mathsf c(S) = 4$.  
\end{example}

\begin{example}\label{e:gluingmoregens}
The numerical monoid $T = \<15,25,35,18,27\> = 5S_1 + 9S_2$ is a gluing, where $S_1 = \<3,5,7\>$ and $S_2 = \<2,3\>$.  In this case, $\mathsf c(T) = 3$, even though $\mathsf c(S_1) = 4$ and both scaling factors $5$ and $9$ are strictly larger than both $\mathsf c(S_1)$ and $\mathsf c(S_2)$.  
\end{example}

\section{Realizable sets of catenary degrees}
\label{sec:realization}

In this section, we apply Theorem~\ref{t:gluing} to characterize which finite subsets of $\ZZ_{\ge 0}$ are realized as the set of catenary degrees of a numerical monoid, thus providing an alternative answer to \cite[Problem~4.1]{mincatdeg} from that appearing in \cite{catenarycartesian}.  

\begin{example}\label{e:realizationplot}
Theorem~\ref{t:realization} implies $S = \<90, 91, 96, 120, 150\>$ has set of catenary degrees $\{0,2,3,5,6\}$.  Figure~\ref{f:realizationplot} depicts the catenary degrees of the elements of $S$.  The~Betti element $480 \in \Betti(S)$ has catenary degree $\mathsf c(480) = 5$, and the elements $n \in S$ with catenary degree $\mathsf c(n) \ge 5$ are precisely those divisible by $480$ (in the monoid-theoretic sense, i.e.\ $n - 480 \in S$).  
\end{example}

\begin{figure}
\begin{center}
\includegraphics[width=6in]{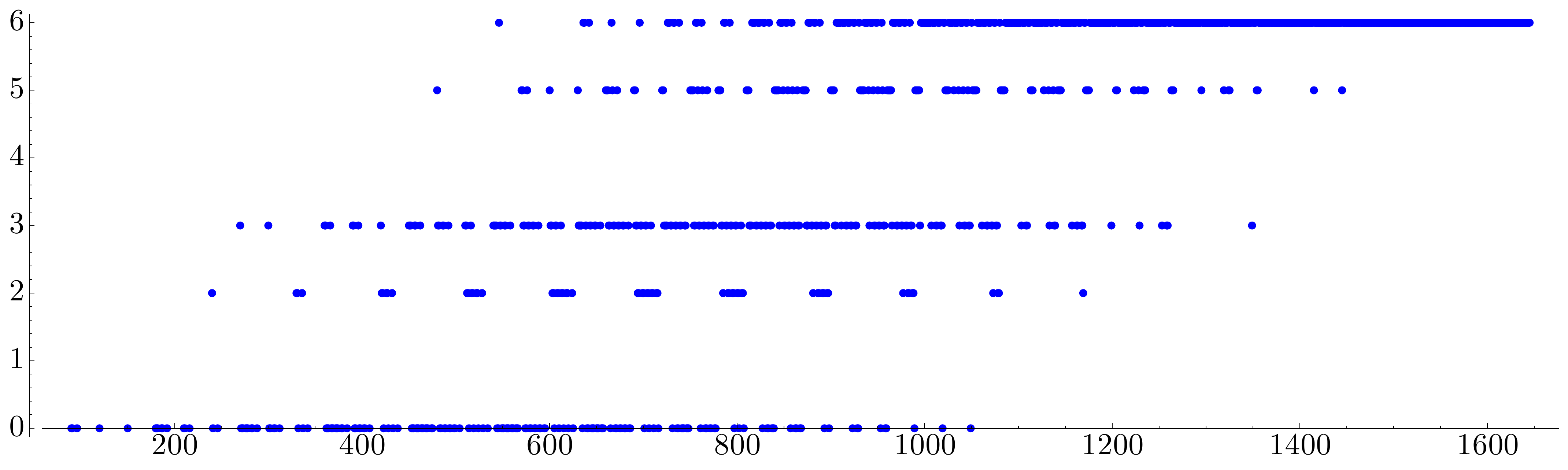}
\end{center}
\caption{The catenary degrees of elements in the monoid from Example~\ref{e:realizationplot}.
}
\label{f:realizationplot}
\end{figure}

\begin{thm}\label{t:realization}
Fix a finite set $C \subset \ZZ_{\ge 0}$.  Then there exists a numerical monoid with set of catenary degrees $C$ if and only if (i) $0 \in C$, (ii) $1 \notin C$, and (iii) $c = \max C \ge 3$.  
\end{thm}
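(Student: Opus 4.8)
The plan is to prove necessity and sufficiency separately, with almost all of the work in the construction direction.

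\emph{Necessity.} The engine is the observation that short moves preserve factorization length: if $\aa, \aa' \in \mathsf Z_S(n)$ satisfy $\mathsf d(\aa, \aa') \le 2$, then $|\aa| = |\aa'|$. Indeed, writing $g = \gcd(\aa,\aa')$, the reduced factorizations $\aa - g$ and $\aa' - g$ have disjoint support and represent a common value; since $\max(|\aa - g|, |\aa' - g|) \le 2$ and no atom can equal another atom or a sum of two atoms, either both reduced factorizations vanish or both have length exactly $2$, and in either case $|\aa| = |\aa'|$. Condition (i) holds because atoms (and $0$) factor uniquely, so $0 \in \mathsf C(S)$. For (ii), a move of distance $\le 1$ forces $\aa = \aa'$ by the same disjoint-support analysis, so an element with two distinct factorizations has catenary degree at least $2$ while a uniquely-factored element has catenary degree $0$; hence $1 \notin \mathsf C(S)$. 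For (iii), assume $S \ne \NN$, so $S = \<n_1, \ldots, n_k\>$ with $k \ge 2$; the element $n_1 n_k$ has the two factorizations $(n_k, 0, \ldots, 0)$ and $(0, \ldots, 0, n_1)$ of distinct lengths $n_k \ne n_1$. Since every chain of distance-$\le 2$ moves keeps length constant, no such chain joins them, forcing $\mathsf c_S(n_1 n_k) \ge 3$ and hence $\max \mathsf C(S) \ge 3$. (The monoid $S = \NN$ realizes the factorial case $C = \{0\}$.)

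\emph{Sufficiency.} Given $C$ satisfying (i)--(iii), write $C = \{0 < c_1 < \cdots < c_m\}$ with $c_1 \ge 2$ and $c_m \ge 3$. I would build a numerical monoid realizing $C$ from a single base monoid followed by repeated gluings, using Theorem~\ref{t:gluing} to append one new, largest catenary value at each step. Suppose $S$ has been built with $\mathsf C(S) = \{0, c_1, \ldots, c_i\}$ and $\mathsf c(S) = c_i < c_{i+1}$. Choosing $b \in S$ with $\gcd(b, c_{i+1}) = 1$ and $b$ large enough that every value of $\mathsf C(S)$ is already attained by some element below $b$, set $T = \<c_{i+1} n_1, \ldots, c_{i+1} n_k, b\>$. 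Theorem~\ref{t:gluing} then assigns every $n \in T$ either $\mathsf c_T(n) = c_{i+1}$ (when $n - c_{i+1} b \in T$) or $\mathsf c_T(n) = \mathsf c_S(n/c_{i+1})$ (otherwise), so that $\mathsf C(T) = \mathsf C(S) \cup \{c_{i+1}\} = \{0, c_1, \ldots, c_{i+1}\}$; such $b$ exist because $S$ is cofinite in $\NN$ and so contains arbitrarily large integers coprime to $c_{i+1}$. Iterating up to $c_m$ yields the desired monoid.

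It remains to supply the base, and this is where the only real subtlety lies. If $c_1 \ge 3$, the two-generated monoid $\<c_1 - 1, c_1\>$ works: its sole Betti element is $(c_1 - 1)c_1$, and every nonunique element is joined to its neighbors by trading $c_1$ copies of $c_1 - 1$ for $c_1 - 1$ copies of $c_1$ at distance exactly $c_1$, giving $\mathsf C = \{0, c_1\}$. The hard case is $c_1 = 2$: no numerical monoid has $\mathsf C = \{0,2\}$, since $\mathsf c(S) = 2$ would make $S$ half-factorial, which the length argument above excludes for $S \ne \NN$. Thus the value $2$ cannot be grafted on by gluing from above and must be placed in the base together with $c_2$. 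I would take $S = \<2c_2 - 2,\, 2c_2 - 1,\, 2c_2\>$, whose defining relations are the balanced $(2c_2 - 2) + 2c_2 = 2(2c_2 - 1)$, of catenary degree $2$, and $c_2(2c_2 - 2) = (c_2 - 1)(2c_2)$, of catenary degree $c_2$. \textbf{The main obstacle} is proving that these are the \emph{only} catenary values of this three-term interval monoid---that nothing in $\{3, \ldots, c_2 - 1\}$ and nothing above $c_2$ occurs---which I would settle by computing its Betti elements and applying $\mathsf c(S) = \max\{\mathsf c_S(m) : m \in \Betti(S)\}$ from \cite[Theorem~3.1]{catenarytamefingen}. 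Once $\mathsf C(S) = \{0, 2, c_2\}$ is in hand, the gluing induction (now begun at $i = 2$) completes the realization of $C$.
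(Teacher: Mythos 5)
Your overall architecture matches the paper's: peel off $\max C$ by induction and realize $C$ from a base monoid via the gluing construction of Theorem~\ref{t:gluing}. Your necessity argument is a correct, self-contained replacement for the paper's citations (the paper gets (iii) from the non-half-factoriality of numerical monoids in~\cite{halffactorial}), and your $\<c_1-1,c_1\>$ base for $C'=\{0,c_1\}$ is fine once you add the one missing sentence: distinct factorizations in $\<c_1-1,c_1\>$ differ by a nonzero multiple of $(c_1,-(c_1-1))$, so no pair is at distance less than $c_1$, which pins $\mathsf C$ down to exactly $\{0,c_1\}$ rather than merely containing it.

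The genuine gap is exactly where you flagged it, and your proposed repair does not close it. For the base case $\{0,2,c_2\}$ you must show that $S=\<2c_2-2,2c_2-1,2c_2\>$ has \emph{no} element of catenary degree in $\{3,\dots,c_2-1\}$. Computing $\Betti(S)$ and invoking $\mathsf c(S)=\max\{\mathsf c_S(m):m\in\Betti(S)\}$ from \cite[Theorem~3.1]{catenarytamefingen} only controls the \emph{maximum} of $\mathsf C(S)$; it says nothing about which intermediate values are attained at non-Betti elements, and ruling those out is precisely the kind of subtlety this paper is about (cf.\ Remark~\ref{r:upperbound}). What is actually needed is an argument on the kernel lattice: every difference of two factorizations of the same element is an integer combination $s(1,-2,1)+t(c_2,0,-(c_2-1))$; if $t=0$ the two factorizations are joined by a $2$-chain through valid intermediate factorizations, while if $t\neq 0$ every such kernel element has weight at least $c_2$, so some link of any chain has weight $\geq c_2$. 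That yields $\mathsf C(S)=\{0,2,c_2\}$, but it is a different (and necessary) argument from the one you sketch. The paper sidesteps this entirely by citing \cite[Remark~4.2]{mincatdeg} and \cite[Theorem~4.3]{mincatdeg} for the base cases $\{0,c\}$ and $\{0,2,c\}$; if you want a self-contained proof you must supply the lattice argument above, not the Betti-maximum formula.

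A smaller point: in the inductive step you should also record that $b$ can be taken larger than the largest minimal generator of $S$ (as the paper does, see Remark~\ref{r:realizationproof}), so that $cn_1,\dots,cn_k,b$ really is the minimal generating set of $T$; otherwise the factorization analysis in Theorem~\ref{t:gluing} is applied to the wrong atom set. Your choice of $b$ "large enough that every value of $\mathsf C(S)$ is attained below $b$" is a good instinct --- it is what guarantees $\mathsf C(S)\subseteq\mathsf C(T)$ --- and can absorb this extra requirement at no cost.
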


\begin{proof}
For the backward direction, (i) and (ii) both clearly follow from Definition~\ref{d:catenarydegree}, and (iii) follows from the fact that no numerical monoid is half-factorial~\cite{halffactorial}.  

For the converse direction, fix a finite set $C$ satisfying conditions (i), (ii) and (iii).   First, if $C = \{0, c\}$ or $C = \{0, 2, c\}$, then $C$ is the set of catenary degrees of a monoid by \cite[Remark~4.2]{mincatdeg} and \cite[Theorem~4.3]{mincatdeg}, respectively.  In all other cases, $C' = C \cap [0, c)$ satisfies  conditions (i), (ii) and (iii) above, so we can inductively assume that $C'$ is the set of catenary degrees of some numerical monoid $S = \<n_1, \ldots, n_k\>$.  Choosing $b \in S$ such that $b > n_k$ and $\gcd(b,c) = 1$ and applying Theorem~\ref{t:gluing} yields a numerical monoid $T$ with set of catenary degrees $C$, as desired.  
\end{proof}

\begin{remark}\label{r:realizationproof}
In the last sentence of the proof of Theorem~\ref{t:realization}, the choice $b > n_k$ simply ensures the monoid $T$ is minimally generated.  
\end{remark}

\begin{example}\label{e:realization}
Let $C = \{0,2,7,20,26,57\}$.  Following the proof of Theorem~\ref{t:realization}, we begin with the monoid $S = \<3,8,13\>$, which has set of catenary degrees $\{0,2,7\}$.  Subsequent choices of $b$ outlined in the following table yields a numerical monoid with set of catenary degrees $C$.  
\begin{center}
\begin{tabular}{|l|l|l|l|}
$c$ & $b$ & $S$ & Catenary degrees \\
\hline
$7$ &   & $\<3,8,13\>$ & $\{0,2,7\}$ \\
$20$ & $51$ & $\<51,60,160,260\>$ & $\{0,2,7,20\}$ \\
$26$ & $1301$ & $\<1301,1326,1560,4160,6760\>$ & $\{0,2,7,20,26\}$ \\
$57$ & $57001$ & $\<57001,74157,75582,88920,237120,385320\>$ & $\{0,2,7,20,26,57\}$
\end{tabular}
\end{center}
Note that each choice of $b$ lies in the monoid $S$ from the previous row, as required by Theorem~\ref{t:realization} (for instance, $1301 = 11 \cdot 51 + 7 \cdot 60 + 2 \cdot 160$).  This can be readily checked with the \texttt{GAP} package \texttt{numericalsgps} \cite{numericalsgps}.  
\end{example}





\begin{thebibliography}{HHHKR10}
\raggedbottom



\bibitem{presentsacc}
M.~Bullejos and P.~Garc\'ia-S\'anchez,
\emph{Minimal presentations for monoids with the ascending chain condition on principal ideals},
Semigroup Forum 85 (2012), no. 1, 185--190.


\bibitem{catenarytamefingen}
S.~Chapman, P.~Garc\'ia-S\'anchez, D.~Llena, V.~Ponomarenko, J.~Rosales,
\emph{The catenary and tame degree in finitely generated commutative cancellative monoids},
Manuscripta Math. 120 (2006), no. 3, 253--264. 

\bibitem{deltarealizationnumerical}
S.~Colton and N.~Kaplan, 
\emph{The realization problem for delta sets of numerical semigroups}, 
preprint.  
Available at \textsf{arXiv: math.AC/1503.08496}.

\bibitem{numericalsgps}
M. Delgado, P. Garc\'ia-S\'anchez, and J. Morais, 
\emph{GAP Numerical Semigroups Package},
\url{http://www.gap-system.org/Manuals/pkg/numericalsgps/doc/manual.pdf}.

\bibitem{catenarycartesian}
Y.~Fan and A.~Geroldinger, 
\emph{Minimal relations and catenary degrees in Krull monoids},
preprint.  
Available at \textsf{arXiv: math.AC/1603.06356}

\bibitem{deltadim3}
P.~Garc\'ia-S\'anchez, D.~Llena, and A.~Moscariello, 
\emph{Delta sets for numerical semigroups with embedding dimension three}, 
to appear, Forum Mathematicum.  
Available at \textsf{arXiv: math.AC/1504.02116}

\bibitem{deltadim3b}
P.~Garc\'ia-S\'anchez, D.~Llena, and A.~Moscariello, 
\emph{Delta sets for symmetric numerical semigroups with embedding dimension three}, 
to appear, Aequationes mathematicae.  

\bibitem{halffactorial}
P.~Garc\'ia-S\'anchez, I.~Ojeda, R.~S\'anchez, and A.~Navarro, 
\emph{Factorization invariants in half-factorial affine semigroups},
International Journal of Algebra and Computation \textbf{23} (2013), no.~1, 111--122. 

\bibitem{nonuniq}
A.~Geroldinger and F.~Halter-Koch, 
\emph{Nonunique factorization}, 
Algebraic, Combinatorial and Analytic Theory, Pure and Applied Mathematics, vol. 278, Chapman \& Hall/CRC, 2006.

\bibitem{deltarsetealization}
A.~Geroldinger and W.~Schmid, 
\emph{A realization theorem for sets of distances},
Journal of Algebra (2017) 481, 188--198.  


\bibitem{mincatdeg}
C.~O'Neill, V.~Ponomarenko, R.~Tate, and G.~Webb, 
\emph{On the set of catenary degrees of finitely generated cancellative commutative monoids},
International Journal of Algebra and Computation \textbf{26} (2016), no.~3, 565--576.  

\bibitem{numerical}
J.~Rosales and P.~Garc\'ia-S\'anchez, 
\emph{Numerical semigroups}, 
Developments in Mathematics, Vol. 20, Springer-Verlag, New York, 2009.



\end{thebibliography}
\end{document}